\documentclass{article}
\usepackage{amsmath}
\usepackage{amsthm}
\usepackage{amsfonts}
\usepackage{amssymb}
\usepackage{graphicx}

\newtheorem*{remark}{Remark}
\newtheorem{corollary}{Corollary}

\begin{document}

\title{Power sums of critical points of gap polynomials for symmetric and
pseudo-symmetric numerical semigroups}
\author{Pavel Grinfeld\\Drexel University}
\maketitle

\begin{abstract}
For a numerical semigroup $S$ with Frobenius number $F$ and gaps $G$, a polynomial $P\left(z\right)=\sum_{g\in G}c_{g}z^{g}$ of degree $F$ is called a gap polynomial. We show that for a symmetric numerical semigroup, the power sums of the critical points of a gap polynomial vanish for every power $g\in G$. For a pseudo-symmetric numerical semigroup, the same result holds if the term $z^{F/2}$ is omitted from $P\left(  z\right)  $. We give several corollaries, including that the sum of the critical values of a gap polynomial vanishes.

\end{abstract}

\section{Introduction}

Numerical semigroups are an active and rapidly expanding area of research.
Beyond their fundamental structural properties, investigation of connections
between numerical semigroups and the roots of polynomials have yielded
important results \cite{Ciolan2016Cyclotomic}, \cite{Moree2014Numerical}. Furthermore, the connection between numerical semigroups and symmetric
polynomials has been receiving a great deal of attention \cite{Fel2015},
\cite{Fel2021}. In this paper, we build upon these connections by
investigating the power sums of the critical points of gap polynomials for
symmetric and pseudo-symmetric numerical semigroups.

The references \cite{RamirezAlfonsin2005}, \cite{Rosales2009}, \cite{Assi2016}
offer excellent introductions to numerical semigroups along with overviews of
the most relevant results. Here, we will only give those definitions that are
relevant to our results.

A \textit{numerical semigroup} $S$ is a set of non-negative integers that is
closed under addition and contains $0$ along with all but finitely many
natural numbers. For example, the numerical semigroup
\begin{equation}
S=\left\{  0,4,5,8,9,10,12,13,14,\ldots\right\}  ,
\end{equation}
contains all integers except for the \textit{set of gaps}
\begin{equation}
G=\left\{  1,2,3,6,7,11\right\}  .
\end{equation}
The largest gap -- $11$ for the present example -- is known as the
\textit{Frobenius number} $F$.

A numerical semigroup $S$ is said to be symmetric if $i\in G$ implies $F-i\in
S$ -- in other words, if $F$ contains one and only one of the numbers $i$ and
$F-i$. The above semigroup $S$ is indeed symmetric, which is easily verified
directly, and is also evident from the central symmetry in the following table
showing its first few elements and gaps:
\begin{equation}
\begin{array}
[c]{cccccccccccc}
0 &  &  &  & 4 & 5 &  &  & 8 & 9 & 10 & \\
& 1 & 2 & 3 &  &  & 6 & 7 &  &  &  & 11
\end{array}
\end{equation}

A numerical semigroup with an even Frobenius number $F$ cannot be symmetric
due to the exceptional gap $F/2$ which does not have a counterpart in $S$.
However, a numerical semigroup with an even Frobenius number $S$ is said to be
\textit{pseudo-symmetric} if the condition $i\in G\Longrightarrow F-i\in S$
holds for all gaps except $F/2$.

For a numerical semigroup with an odd Frobenius number $F$, define a
\textit{gap polynomial} $P\left(  z\right)  $ by
\begin{equation}
P\left(z\right)  =\sum_{g\in G}c_{g}z^{g}\qquad\text{ where }c_{F}\neq0.
\end{equation}
For a numerical semigroup with an even Frobenius number $F$, define a
\textit{pseudo-gap polynomial} by additionally omitting the exceptional
$z^{F/2}$ term, i.e.
\begin{equation}
P\left(  z\right)  =\sum_{\substack{g\in G\\g\neq F/2}}c_{g}z^{g}\qquad\text{ where }c_{F}\neq0.
\end{equation}

\section{Statement of the theorem}

Let $S$ be a symmetric (pseudo-symmetric) numerical semigroup with Frobenius
number $F$, and let $P\left(  z\right)  $ be a gap (pseudo-gap) polynomial of
$S$. Suppose that $r_{1}$, $r_{2}$, $\ldots$, $r_{F-1}$ are the $F-1$ critical
points of $P\left(  z\right)  $, counted with multiplicity, and let $p_{k}$
denote their power sums, i.e.
\begin{equation}
p_{k}=r_{1}^{k}+r_{2}^{k}+\cdots+r_{F-1}^{k}.
\end{equation}
Then
\begin{equation}
p_{g}=0 \label{pg}
\end{equation}
for every gap $g$.

\section{A proof of the theorem}

We will first assume that $S$ is symmetric and make the necessary adjustment
at the end for a pseudo-symmetric $S$. The critical points of $P\left(
z\right)  $ are the roots of its derivative
\begin{equation}
P^{\prime}\left(  z\right)  =\sum_{n=0}^{F-1}\left(  n+1\right)  c_{n+1}z^{n},
\end{equation}
For convenience, we will consider the monic version $M\left(  z\right)  $ of
$P^{\prime}\left(  z\right)  $, i.e.
\begin{equation}
M\left(  z\right)  =\sum_{n=0}^{F-1}C_{n+1}z^{n}, \label{M}
\end{equation}
where
\begin{equation}
C_{n}=\left\{
\begin{array}
[c]{rr}
\frac{nc_{n}}{Fc_{F}}, & \text{if }n\in G\\
0, & \text{if }n\notin G
\end{array}
\right.
\end{equation}
For $M\left(  z\right)  $, Newton's identities read
\begin{equation}
p_{k}=-\sum_{i=1}^{k-1}p_{i}C_{F-k+i}-kC_{F-k} \label{NI}
\end{equation}

We will now proceed by induction. For $k=1$, we have
\begin{equation}
p_{1}=-C_{F-1},
\end{equation}
which vanishes because $1\in G$ implies, by symmetry, that $F-1\in S$, so
$C_{F-1}=0$.

Next, consider equation (\ref{NI}) for $k=g\in G$, i.e.
\begin{equation}
p_{g}=-\sum_{i=1}^{g-1}p_{i}C_{F-g+i}-gC_{F-g}.
\end{equation}
The term $gC_{F-g}$ outside the summation vanishes because $F-g\in S$ by
symmetry and therefore $C_{F-g}=0$. The terms $p_{i}C_{F-g+i}$, on the other
hand, vanish because either:\newline\qquad\qquad1. $i\in G$ and therefore
$p_{i}=0$ by induction, or\newline\qquad\qquad2. $i\in S$, in which case
$F-g+i=\left(  F-g\right)  +i\in S$ by additive closure, and therefore
$C_{F-g+i}=0$. This completes the proof for a symmetric $S$.

For a pseudo-symmetric $S$, the above argument fails only for $g=F/2$ since,
for this particular value, $g\in G$ does not imply $F-g\in S$. To make the
necessary adjustment in our proof, we will make use of the following lemma.

Lemma. For a pseudo-symmetric semigroup $S$, if $i\in S$ then $F/2+i\in S$.

Proof. If $i>F/2$ then the statement of the Lemma is obvious. Otherwise, suppose, on the contrary, that $F/2+i\in G$. Then, by symmetry, $F-\left(  F/2+i\right)=F/2-i\in S$. Consequently, by additive closure, $i+\left(  F/2-i\right)=F/2\in S$, which contradicts the fact that $F/2$ must be a gap.

Let us now re-examine this power sum $p_{F/2}$, i.e.

\begin{equation}
p_{F/2}=-\sum_{i=1}^{F/2-1}p_{i}C_{F/2+i}-\frac{F}{2}C_{F/2}.
\end{equation}
The term outside the summation vanishes by the definition of a pseudo-gap
polynomial. The terms $p_{i}C_{F/2+i}$, on the other hand, once again vanish
because either:\newline\qquad\qquad1. $i\in G$ and therefore $p_{i}=0$ by
induction, or\newline$\qquad\qquad$2. $i\in S$, in which case $F/2+i\in S$ by
the above lemma and therefore $C_{F/2}+i=0$. This completes the proof for a pseudo-symmetric $S$.

\begin{remark}
If the $z^{F/2}$ term were not omitted in the pseudo-symmetric case, most of
the power sums $p_{g}$ corresponding to the gaps would still vanish, the two
exceptions being $p_{F/2}$ and $p_{F}$.
\end{remark}

\section{Corollaries}

A number of corollaries readily follow from the above theorem. Let us refer to
the set of critical points $\left\{  r_{1},r_{2},\ldots,r_{F-1}\right\}  $ as
the \textit{critical set} $r$.

\begin{corollary}
Any power sum product $p_{\lambda}=p_{\lambda_{1}}\cdots p_{\lambda_{l}}$ of
$F-1$ variables vanishes at the critical set $r$ if at least one of
$\lambda_{1},\cdots,\lambda_{l}$ is a gap of $S$. The proof of this corollary
is immediate.
\end{corollary}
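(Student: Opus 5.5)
The corollary is a direct consequence of the theorem of Section 2. I will interpret the power sum product $p_{\lambda}=p_{\lambda_1}\cdots p_{\lambda_l}$ as a polynomial in $F-1$ variables $x_1,\ldots,x_{F-1}$:
\begin{equation}
p_{\lambda}\left(x_1,\ldots,x_{F-1}\right)=\prod_{j=1}^{l}\left(x_1^{\lambda_j}+\cdots+x_{F-1}^{\lambda_j}\right).
\end{equation}
Evaluating it at the critical set $r$ means substituting $x_m=r_m$. Since evaluation is a ring homomorphism, the value of the product is the product of the values of its factors:
\begin{equation}
p_{\lambda}\left(r\right)=\prod_{j=1}^{l}p_{\lambda_j}\left(r\right).
\end{equation}

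Suppose $\lambda_{j_0}\in G$ for some index $j_0$. The theorem (equation (\ref{pg})) gives $p_{\lambda_{j_0}}(r)=0$, and this holds in both settings: a gap polynomial of a symmetric $S$, and a pseudo-gap polynomial of a pseudo-symmetric $S$. Hence one factor of the product vanishes, and so the whole product vanishes. No hypothesis is needed on the remaining parts $\lambda_j$, which may be elements of $S$ or repeated values.

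There is no real obstacle here. The only point requiring care is to confirm that the labeling of the critical points and their multiplicities is irrelevant. This follows because $p_{\lambda}$ is a symmetric function of its $F-1$ variables and the theorem is stated for the critical points counted with multiplicity. The argument therefore reduces to the multiplicativity of evaluation together with the theorem.
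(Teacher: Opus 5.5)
Your proof is correct and is exactly the immediate argument the paper intends: evaluation at $r$ is multiplicative, and the factor $p_{\lambda_{j_0}}$ with $\lambda_{j_0}\in G$ vanishes by the theorem, so the whole product vanishes. Your remark about labeling and multiplicities is harmless but not needed, since the theorem already treats the critical points as a multiset of size $F-1$.
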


\begin{corollary}
Any homogeneous symmetric polynomial $Q\left(  z_{1},\cdots,z_{F-1}\right)  $
of degree $g\in G$ vanishes at the critical set $r$. In particular, any
elementary symmetric polynomial $e_{g}\left(  z_{1},\cdots,z_{F-1}\right)  $
vanishes at $r$.

\begin{proof}
By the fundamental theorem of symmetric polynomials, $Q$ can be written as a
linear combination of power sum products corresponding to the integer
partitions of $g$, i.e.
\begin{equation}
Q\left(  z_{1},\cdots,z_{F-1}\right)  =\sum_{\lambda}c_{\lambda}p_{\lambda
}\left(  z_{1},\cdots,z_{F-1}\right)
\end{equation}
where $p_{\lambda}=p_{\lambda_{1}}\cdots p_{\lambda_{n}}$ and $\lambda
_{1}+\cdots+\lambda_{n}=g$. At least one of $\lambda_{1},\cdots,\lambda_{n}$
must be a gap since otherwise their sum would lie in $S$ by additive closure.
An appeal to the previous corollary completes the proof.
\end{proof}
\end{corollary}

\begin{corollary}
The sum of the critical values of a gap or a pseudo-gap polynomial vanishes,
i.e.
\begin{equation}
\sum_{i=1}^{F-1}P\left(  r_{i}\right)  =0.
\end{equation}

\end{corollary}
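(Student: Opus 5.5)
The plan is to expand the sum of critical values by linearity and reduce it to power sums already controlled by the theorem. Since
\begin{equation}
P\left(z\right)=\sum_{g\in G}c_{g}z^{g}
\end{equation}
(with the $g=F/2$ term absent in the pseudo-symmetric case), we get
\begin{equation}
\sum_{i=1}^{F-1}P\left(r_{i}\right)=\sum_{g}c_{g}\sum_{i=1}^{F-1}r_{i}^{g}=\sum_{g}c_{g}\,p_{g},
\end{equation}
where $g$ runs over the exponents actually appearing in $P$. Every such exponent is a gap of $S$, so the theorem gives $p_{g}=0$ for each one. The whole sum therefore vanishes. Note that $P$ has no constant term because $0\in S$, so no $p_{0}=F-1$ term appears.

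The only point needing care is the pseudo-symmetric case. There the theorem's conclusion $p_{g}=0$ for every gap $g$ was established under the hypothesis that $P$ is a pseudo-gap polynomial, i.e.\ the $z^{F/2}$ term is omitted. This is exactly the hypothesis of the corollary. Consequently $p_{F/2}$ also vanishes, though it is not even needed, since $c_{F/2}=0$.

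This also matches the Remark: if the $z^{F/2}$ term were kept, $p_{F/2}$ and $p_{F}$ could be nonzero, and both would enter the sum with nonzero coefficients. That is why the pseudo-gap restriction is essential. I expect no real obstacle beyond this bookkeeping. In particular, the top term $c_{F}p_{F}$ is covered because $F$ itself is a gap.
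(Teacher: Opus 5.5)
Your proposal is correct and is essentially the paper's proof. The paper likewise interchanges the sums to get $\sum_{g} c_g p_g$ and kills each term with the theorem's $p_g = 0$; your remarks on the absent constant term and the omitted $z^{F/2}$ term are accurate extra bookkeeping.
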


\begin{proof}
We have,
\begin{equation}
\sum_{i=1}^{F-1}P\left(  r_{i}\right)  =\sum_{i=1}^{F-1}\sum_{g\in G}
c_{g}r_{i}^{g}.
\end{equation}
Interchanging the order of summation yields
\begin{equation}
\sum_{i=1}^{F-1}P\left(  r_{i}\right)  =\sum_{g\in G}c_{g}\sum_{i=1}
^{F-1}r_{i}^{g}=\sum_{g\in G}c_{g}p_{g}=0,
\end{equation}
as we set out to show.
\end{proof}

\section{Conclusion}

We have described a novel connection, captured by equation (\ref{pg}) between
symmetric and pseudo-symmetric numerical semigroups and the theory of
symmetric polynomials. We expect this connection will enrich both fields and
open new avenues for future investigations.

\bibliographystyle{abbrv}
\bibliography{Classics,NSG}

\begin{thebibliography}{1}

\bibitem{Assi2016}
A.~Assi and P.~A. Garc{\'i}a-S{\'a}nchez.
\newblock {\em Numerical Semigroups and Applications}, volume~1 of {\em RSME Springer Series}.
\newblock Springer, 2016.

\bibitem{Ciolan2016Cyclotomic}
E.-A. Ciolan, P.~A. Garc{\'i}a-S{\'a}nchez, and P.~Moree.
\newblock Cyclotomic numerical semigroups.
\newblock {\em SIAM Journal on Discrete Mathematics}, 30(2):650--668, 2016.

\bibitem{Fel2015}
L.~G. Fel.
\newblock Genera of numerical semigroups and polynomial identities.
\newblock {\em Symmetry, Integrability and Geometry: Methods and Applications (SIGMA)}, 11:065, 2015.

\bibitem{Fel2021}
L.~G. Fel.
\newblock Symmetric polynomials associated with numerical semigroups.
\newblock {\em Discrete Mathematics Letters}, 5:56--62, 2021.

\bibitem{Moree2014Numerical}
P.~Moree.
\newblock Numerical semigroups, cyclotomic polynomials, and {B}ernoulli numbers.
\newblock {\em The American Mathematical Monthly}, 121(10):890--902, 2014.

\bibitem{RamirezAlfonsin2005}
J.~L. Ram{\'i}rez~Alfons{\'i}n.
\newblock {\em The Diophantine Frobenius Problem}, volume~30 of {\em Oxford Lecture Series in Mathematics and Its Applications}.
\newblock Oxford University Press, 2005.

\bibitem{Rosales2009}
J.~C. Rosales and P.~A. Garc{\'i}a-S{\'a}nchez.
\newblock {\em Numerical Semigroups}, volume~20 of {\em Developments in Mathematics}.
\newblock Springer, 2009.

\end{thebibliography}

\end{document}